\newtheorem{proposition}{Proposition}
\newtheorem{theorem}{Theorem}
\theoremstyle{remark}
\newtheorem{remark}{Remark}
\title{Estimates for the quantized tensor train ranks for the power functions}
\author[1,2]{Sergey A. Matveev\thanks{Corresponding author: matseralex@cs.msu.ru}}
\author[1,2]{Matvey Smirnov}
\affil[1]{Lomonosov MSU, Faculty of Computational Mathematics and Cybernetics, Moscow, Russia, 119991}
\affil[2]{Marchuk Institute of Numerical Mathematics, RAS, Gubkin st. 8, Moscow, Russia, 119333}
\date{}
\begin{document}
\maketitle{}

\begin{abstract}
In this work we provide theoretical estimates for the ranks of the power functions $f(k) = k^{-\alpha}$, $\alpha>1$ in the quantized tensor train (QTT) format for $k = 1, 2, 3, \ldots, 2^{d}$. Such functions and their several generalizations (e.~g. $f(k) = k^{-\alpha} \cdot e^{-\lambda k}, \lambda > 0$)  play an important role in studies of the asymptotic solutions of the aggregation-fragmentation kinetic equations. In order to support the constructed theory we verify the values of QTT-ranks of these functions in practice with the use of the TTSVD procedure and show an agreement between the numerical and analytical results.
\end{abstract}

\section{Introduction}

A lot of models in theoretical physics and mathematics have equilibrium and long-term asymptotic solutions corresponding to universal laws. Knowledge of such basic asymptotics and scalings is extremely useful and might lead to simplified yet correct equations. In particular, such universal observations are possible for the diffusion equation, Helmholtz equation, Fokker-Planck equation, and Smoluchowski aggregation equations (and their generalizations). In this work, we concentrate on the analysis of the power functions 
$$f(k) = k^{-\alpha}$$
arising very often in scaling laws of the aggregation-fragmentation equations \cite{fortin2023stability, leyvraz2003scaling}.  However, numerical verification of such a scaling might require the utilization of a tremendously large number of kinetic equations \cite{matveev2020oscillating}. This fact leads to a certain need for advanced numerical methods and usage of the tensor-based algorithms for efficient storage of the numerical solutions during the calculations.

Originally, the tensor train decomposition has been elaborated for memory-efficient representation of multivariate arrays (a. k. a. tensors) \cite{oseledets2011tensor} with the use of the following idea
\begin{eqnarray}   
\notag
& A(i_1, i_2, \ldots, i_{d-1}, i_d) = \\ \notag
& \\ \label{eq:TT-definition} \\
&  \notag \sum\limits_{\alpha_1=1}^{R_1} \sum\limits_{\alpha_2}^{R_2} \ldots \sum\limits_{\alpha_{d-2}}^{R_{d-2}} \sum\limits_{\alpha_{d-1}}^{R_{d-1}}
    G_1(i_1, \alpha_1)~ G_2(\alpha_1, i_2, \alpha_2) \ldots G_{d-1}(\alpha_{d-2}, i_{d-1}, \alpha_{d-2}) ~ G_d(\alpha_{d-1}, i_d).
\end{eqnarray}

It allows to store the sequence $G_1(i_1, \alpha_1), \ldots, G_d(\alpha_{d-1}, i_d)$ of two- and three-dimensional arrays instead of original tensor $A(i_1, i_2, \ldots, i_{d-1}, i_d)$. This format reduces the memory requirements from $N^d$ memory cells (assume, we use $N \times \ldots \times N$ array) to just $O(d N R^2)$, where $R = \max {R_i}$. The set of minimal possible $R_{i}$ in equation \eqref{eq:TT-definition} is called tensor train ranks of $A$. 
A lot of function-generated tensors (e.g. trigonometric, polynomials, etc) possess exact low-parametric representation with tensor train decomposition.

In addition to a broad family of exactly low-rank tensors there exists a broader family of tensors \cite{tyrtyshnikov2003tensor} with very good low-rank tensor train approximations (e.g. in Frobenius norm) that can be calculated with the use of the close to optimal in terms of accuracy TT-SVD algorithm (and its randomized generalizations) \cite{oseledets2009breaking, sultonov2023low}. The quasioptimality constant for the TT-SVD method is $\sqrt{d-1}$ meaning that
$$\|A - \hat{A} \|_F \leq \sqrt{d-1} \| A - A_{*}\|_F,$$
where $\hat{A}$ is an approximation constructed by the TT-SVD algorithm and $A_{*}$ is the best possible approximation with the same set of the TT-ranks. The total complexity of the TT-SVD method is $O(N^{d+1})$ operations and can be reduced to $O(N^{d})$ with utilization of the randomized approaches without significant loss in terms of the accuracy estimates.

Approximations in the tensor train format also can be found via the much faster but less robust in terms of theoretical accuracy adaptive TT-cross approximation method \cite{oseledets2010tt} requiring evaluation of just $O(d N R^2)$ elements of the target tensor with final cost $O(dNR^3)$ operations. Even though this format has been known for a long time as a matrix product state in quantum physics \cite{white1992density}, only its independent reinvention in numerical analysis as a tensor train format gave birth to these methods leading to constructive approximations of miscellaneous data. Nowadays, there exist many useful and efficient basic algorithms (such as elementwise sum and Hadamard product) \cite{khoromskij2018tensor} for operations with data in tensor-train format as well as complicated optimization procedures such as alternating minimal energy methods for linear systems in higher dimensions \cite{dolgov2014alternating}  or global optimization procedures \cite{zheltkov2020global}.

The tensor train has shown its surprisingly great performance for data compression and scientific computing with the introduction of virtual dimensions into even one-dimensional arrays \cite{oseledets2009approximation}. In simple words, this idea allows to reshape a vector with $N=2^d$  entries as d-dimensional tensor $\underbrace{2 \times \ldots  \times 2}_{\text{d times}}$ and its further decomposition into the tensor train format. If the corresponding tensor train ranks are \textit{sufficiently small} we observe a tremendous compression of initial data to $O(d \cdot 2 \cdot R^2)$ memory cells meaning the logarithmic requirements $O(\log N \cdot R^2)$ for storage of either initial or approximated data.  Such a trick is known as the quantized tensor train (QTT) format and becomes useful for a really wide number of fundamental and applied problems.

However, accurate analytical estimates for approximate QTT ranks of many functions are still under the interest of researchers. One of the most recent successes seems to be the result on QTT-ranks of polynomial functions \cite{vysotsky2021tt} and inverses of circulant matrices \cite{vysotsky2023tensor}.  Another interesting yet experimental observation seems to be the application of the quantized tensor train format for storage of the numerical solution during the iterations of Newton method \cite{timokhin2020tensorisation} solving the aggregation-fragmentation equations with $d$ up to $40$. In this set of experiments, asymptotic solutions are close to the power functions and the QTT-ranks of their numerical approximations do not exceed 50 for approximation error $10^{-14}$. In this work we provide both theoretical and experimental research of the QTT-ranks for the decaying power functions $f(k) = k^{-\alpha}$, $\alpha > 1$.

\section{Estimates for QTT-ranks}
For this section we fix a real sequence $f = \{f(k)\}_{k = 1}^\infty$. Given $d \in \mathbb N$ we denote by $Q(d,f)$ the tensor with size $2 \times \dots \times 2 = 2^d$ obtained by natural reshaping of the vector $(f(1), \dots, f(2^d))$. The $s$-unfolding matrix (that is, the matrix obtained by merging first $s$ indices and last $d-s$ indices of $Q(d,f)$ into multi-indices) of $Q(d, f)$ has the size $2^s \times 2^{d-s}$ and is given by
\begin{equation}
    A(s,d,f) = \begin{pmatrix}
        f(1) & f(2) & \dots & f(2^{d-s}) \\
        f(2^{d-s} + 1) & f(2^{d-s} + 2) & \dots & f(2^{d-s+1}) \\
        \vdots & \vdots & \ddots & \vdots \\
        f((2^{s}-1)2^{d-s} + 1) & f((2^s - 1)2^{d-s} + 2) & \dots & f(2^d)
    \end{pmatrix}.
\end{equation}
That is, an element of this matrix has the form $$A(s,d,f)_{ij} = f((i-1)2^{d-s} + j).$$
The ranks of such unfolding matrices are exactly the minimal TT-ranks of the corresponding tensor as well as their numerical approximations lead to the numerical construction of the tensor train via the TTSVD algorithm. 
The estimates on singular values of matrices $A(s,d,f)$ can be derived by considering the following Hankel matrix

\begin{equation}
    H(d,f) = \
    \begin{pmatrix}
        f(1) & f(2) & \dots & f(2^{d-1}) \\
        f(2) & f(3) & \dots & f(2^{d-1} + 1) \\
        \vdots & \dots & \ddots & \vdots \\
        f(2^{d-1}) & f(2^d + 1) & \dots & f(2^d)
    \end{pmatrix}.
\end{equation}
Clearly, $A(s,d,f)$ is a submatrix in $H(d,f)$ for $s = 1,\dots,d-1$.

Our analysis is based on the estimates of singular values of positive semi-definite Hankel matrices (see~\cite{BeckTown, teeHankel}). 
Recall that a sequence $f$ gives rise to the Hamburger moment problem. This problem asks whether there exists a positive Borel measure on $\mathbb R$ such that
\begin{equation}\label{eqHamburger}
f(k) = \int_{\mathbb R} t^{k-1} d\mu(t),\;k \in \mathbb N.
\end{equation}
It is clear that the Hamburger moment problem for the sequence $f(k) = k^{-\alpha}$, $\alpha > 0$ admits a solution, since~\eqref{eqHamburger} is satisfied with 
$$d\mu(t) = \chi_{[0,1]} (-\ln t)^{\alpha - 1}dt / \Gamma(\alpha),$$
where $\chi_A$ denotes the characteristic function of a set $A \subset \mathbb R$.

As far as QTT-decomposition of $e^{-\beta k}$ has all ranks equal to 1, the QTT-ranks of $f(k) \cdot e^{-\beta k}$ are less or equal to the QTT-ranks of $f(k)$ because the corresponding decomposition of $f(k)$ can be multiplied elementwise by the QTT-representation of exponential function without change of ranks.
\begin{proposition}\label{prBasicPropertiesOfH}
    ~\\\begin{enumerate}
        \item\label{prBasicI} $H(d,f)$ is positive semi-definite for all $d \in \mathbb N$ if and only if the Hamburger moment problem for $f$ is solvable.
        \item\label{prBasicII} Assume that there exists an essentially bounded periodic function $g(t)$ on $\mathbb R$ such that 
        $$f(k) = \frac{1}{2 \pi}\int_0^{2\pi} g(t)e^{-ikt}dt, \quad k \in \mathbb N.$$
        Then
        $$\sup_{d \in \mathbb N} \|H(d,f)\|_2 =  \lim_{d \rightarrow \infty} \|H(d,f)\|_2 \le \mathrm{esssup}_{t \in \mathbb R} |g(t)|.$$
    \end{enumerate}
\end{proposition}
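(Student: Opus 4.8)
The plan is to prove the two parts separately, since they rest on different classical facts.

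For part (\ref{prBasicI}), I would recognize $H(d,f)$ as the truncated moment (Hankel) matrix whose $(i,j)$ entry is $f(i+j-1)$. The plan is to invoke the classical Hamburger theory directly: if the moment problem is solvable with representing measure $\mu$, then for any vector $c = (c_1,\dots,c_{2^{d-1}})$ one has
$$
c^\top H(d,f)\,c = \sum_{i,j} c_i c_j \int_{\mathbb R} t^{i+j-2}\,d\mu(t) = \int_{\mathbb R}\Bigl(\sum_i c_i t^{i-1}\Bigr)^2 d\mu(t) \ge 0,
$$
so every $H(d,f)$ is positive semi-definite. Conversely, if all the Hankel matrices are positive semi-definite, then the functional $L(p) = \sum_k a_k f(k)$ defined on polynomials $p(t) = \sum_k a_k t^{k-1}$ is nonnegative on squares, hence nonnegative on all polynomials that are nonnegative on $\mathbb R$; Hamburger's theorem then furnishes the representing measure. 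I would state that this equivalence is precisely the classical solvability criterion and cite the moment-problem literature rather than reprove Hamburger's theorem.

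For part (\ref{prBasicII}), I would first observe that the hypothesis exhibits $f(k)$ as the $k$-th Fourier coefficient of the periodic function $g$, so $H(d,f)$ is a finite Hankel section built from these Fourier coefficients. The natural approach is to realize $H(d,f)$ as a compression of the Hankel operator $\Gamma_g$ associated with the symbol $g$, and to use the Nehari-type bound that the norm of such a Hankel operator is controlled by the essential supremum of $g$. Concretely, for unit vectors $u,v$ supported on the first $2^{d-1}$ coordinates I would write the bilinear form $\langle H(d,f) u, v\rangle$ as $\frac{1}{2\pi}\int_0^{2\pi} g(t) U(t) V(t)\,dt$, where $U$ and $V$ are trigonometric polynomials formed from the entries of $u$ and $v$, arranged so that their product picks out exactly the Fourier modes $e^{-ikt}$ appearing in the matrix entries. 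Bounding $|g|$ by its essential supremum and using $L^2$-orthonormality of the exponentials then yields $\|H(d,f)\|_2 \le \operatorname{esssup}|g|$ uniformly in $d$. Monotonicity of the norms as $d$ increases (each $H(d,f)$ embeds isometrically as a corner of $H(d+1,f)$) gives that the supremum over $d$ equals the limit.

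The main obstacle I expect is the bookkeeping in part (\ref{prBasicII}): one must choose the trigonometric polynomials $U,V$ with the correct index shifts so that the double sum $\sum_{i,j} u_i v_j f(i+j-1)$ matches the single integral against $g$, accounting for the $+1$ offset in the entry $f(i+j-1)$ and the conjugation conventions in the Fourier pairing. Getting the exponents to line up so that the cross terms integrate to the Hankel entries (and the spurious modes vanish) is the delicate computational step, whereas the final norm estimate is then immediate from $|g| \le \operatorname{esssup}|g|$ and Cauchy--Schwarz. I would present this pairing carefully and leave the routine orthogonality computation to the reader.
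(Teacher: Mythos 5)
Your proposal is correct, but it takes a genuinely more self-contained route than the paper, whose entire proof consists of two citations: part~\eqref{prBasicI} is referred to Reed--Simon (Theorem~X.4), and part~\eqref{prBasicII} is deduced as a particular case of Nehari's Theorem~I. What you do differently: for part~\eqref{prBasicI} you actually prove the direction ``representing measure implies positive semi-definite'' via the quadratic-form identity $c^\top H(d,f)\,c = \int_{\mathbb R}\bigl(\sum_i c_i t^{i-1}\bigr)^2\,d\mu(t) \ge 0$, citing the literature only for the converse, which is indeed the nontrivial content of Hamburger's theorem (and is exactly what the Reed--Simon reference supplies). For part~\eqref{prBasicII} you give a complete direct proof rather than a citation: the pairing $\langle H(d,f)u,v\rangle = \frac{1}{2\pi}\int_0^{2\pi} g(t)\,U(t)V(t)\,dt$ with $U(t)=\sum_n u_n e^{-\mathrm{i}nt}$ and $V(t)=\sum_m v_m e^{-\mathrm{i}(m-1)t}$ does line the exponents up with the entries $f(m+n-1)$, and Cauchy--Schwarz plus Parseval then give $\|H(d,f)\|_2 \le \mathrm{esssup}_{t}|g(t)|$ uniformly in $d$; your submatrix/zero-padding observation correctly upgrades the uniform bound to $\sup_d = \lim_d$, a detail the paper leaves implicit. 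It is worth emphasizing what your route makes visible: the proposition only needs the \emph{elementary} direction of Nehari's theorem (a bounded symbol yields a bounded Hankel form), which is precisely what your computation establishes; the deep half of Nehari's theorem (every bounded Hankel form admits a bounded symbol) is never used. So your version is self-contained modulo Hamburger's theorem, at the cost of the index bookkeeping you describe, whereas the paper's proof buys brevity by outsourcing both statements to classical references.
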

\begin{proof}
    The proof of~\eqref{prBasicI} can be found in~\cite[Theorem~X.4]{ReedSimon}. The statement~\eqref{prBasicII} is a particular case of~\cite[Theorem~I]{Nehari}.
\end{proof}

\begin{proposition}\label{prEstimatesOnH}
    ~\\Assume that the Hamburger moment problem for $f$ is solvable and let $$q(d) = \exp\left( \frac{\pi^2}{4((d + 2)\ln2 - \ln\pi)}\right).$$ Then the following statements hold.
    \begin{enumerate}
        \item\label{prEstI} The singular values of $H(d,f)$ satisfy inequalities
        $$
            \sigma_{2k + 1} (H(d,f)) \le 16 q(d)^{-2k+2}\|H(d,f)\|_2.
        $$
        \item\label{prEstII} There exists a constant $C_1 > 0$ independent of $f$ and $d$ such that the distance from $H(d,f)$ to matrices of rank $r$ in Frobenius norm is estimated from above by
        $$
            C_1 \|H(d,f)\|_2 dq(d)^{-r}.
        $$
        \item\label{prEstIII} There exists a constant $C_2 > 0$ independent of $f$ such that for $\varepsilon \in (0,1)$ and $d \in \mathbb N$ there is a matrix $H$ such that $\|H(d,f) - H\|_F \le \varepsilon$ and $$\mathrm{rank} H \le C_2 d (\ln (d) + \ln(1/\varepsilon) + \ln(\|H(d,f)\|_2) +1).$$ 
    \end{enumerate}
\end{proposition}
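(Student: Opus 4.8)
The plan is to establish the three estimates in sequence, using part~\eqref{prEstI} as the analytic core and deriving parts~\eqref{prEstII} and~\eqref{prEstIII} as combinatorial consequences. The starting point is Proposition~\ref{prBasicPropertiesOfH}\eqref{prBasicI}, which guarantees that $H(d,f)$ is positive semi-definite whenever the Hamburger moment problem for $f$ is solvable; this is essential because the sharp exponential decay of singular values of a positive semi-definite Hankel matrix is precisely the content of the results of Beckermann--Townsend and Pfrang--Townsend type estimates cited in~\cite{BeckTown, teeHankel}. The quantity $q(d)$ has the flavor of the reciprocal of a Zolotarev number for two real intervals, and indeed the expression $\exp(\pi^2 / (4((d+2)\ln 2 - \ln \pi)))$ looks like the asymptotic value of such a Zolotarev constant for intervals whose geometry is dictated by the $2^{d-1} \times 2^{d-1}$ size of $H(d,f)$. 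So for part~\eqref{prEstI} I would invoke the known bound that the singular values of an $n \times n$ positive semi-definite Hankel matrix decay like $\sigma_{2k+1} \le C \rho^{-2k} \sigma_1$, substitute the size $n = 2^{d-1}$ into the formula for the decay rate $\rho$, and identify $\rho$ with $q(d)$; the constant $16$ and the shift $-2k+2$ in the exponent should fall out of tracking the explicit constants in that estimate together with $\sigma_1 \le \|H(d,f)\|_2$.

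For part~\eqref{prEstII} the plan is purely to convert the pointwise singular-value bound into a Frobenius-norm tail estimate. Truncating at rank $r$ leaves the Frobenius distance equal to $\bigl(\sum_{j > r} \sigma_j^2\bigr)^{1/2}$. Using the geometric decay from~\eqref{prEstI}, I would bound this tail by a geometric series in $q(d)^{-2k}$; the sum of a geometric series with ratio $q(d)^{-2}$ contributes a factor $1/(1 - q(d)^{-2})$, and since $q(d) \to 1$ like $1 + O(1/d)$ as $d \to \infty$, this factor grows like $O(d)$, which is exactly the source of the linear factor $d$ appearing in the bound $C_1 \|H(d,f)\|_2 d q(d)^{-r}$. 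The main subtlety here is bookkeeping: one must check that absorbing the geometric-series constant and the factor $16$ into a single $d$-independent constant $C_1$ is legitimate, which requires a uniform lower bound on $d(1 - q(d)^{-2})$ over all $d \in \mathbb N$; I expect this to hold because $1 - q(d)^{-2} \sim c/d$.

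For part~\eqref{prEstIII} the goal is to invert the estimate in~\eqref{prEstII}: given a target accuracy $\varepsilon$, I would choose the rank $r$ just large enough that $C_1 \|H(d,f)\|_2 d q(d)^{-r} \le \varepsilon$, and read off the resulting $r$. Solving for $r$ gives $r \ge \ln(C_1 \|H(d,f)\|_2 d / \varepsilon) / \ln q(d)$. The crucial observation is that $\ln q(d) = \pi^2 / (4((d+2)\ln 2 - \ln \pi))$ behaves like $c/d$ for large $d$, so $1/\ln q(d) = O(d)$; multiplying the logarithmic numerator by this $O(d)$ factor produces exactly the claimed bound $C_2 d(\ln d + \ln(1/\varepsilon) + \ln(\|H(d,f)\|_2) + 1)$, with the separate logarithms arising from $\ln(C_1 d \|H(d,f)\|_2 / \varepsilon) = \ln C_1 + \ln d + \ln \|H(d,f)\|_2 + \ln(1/\varepsilon)$. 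I anticipate the main obstacle to be making all constants genuinely uniform in $d$: one must verify that $d / \ln q(d)$ is bounded above and below by $d$-independent multiples of $d^2$ (equivalently, that $\ln q(d)$ is comparable to $1/d$ for all $d$, not just asymptotically), and that the ceiling operation in choosing an integer $r$ contributes only to the additive constant. This uniformity argument, rather than any deep new idea, is where the careful work lies; once it is in place, parts~\eqref{prEstII} and~\eqref{prEstIII} are immediate consequences of~\eqref{prEstI}.
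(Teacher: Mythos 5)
Your proposal is correct and follows essentially the same route as the paper: part~\eqref{prEstI} is precisely the Beckermann--Townsend bound for positive semi-definite Hankel matrices (\cite[Corollary~5.5]{BeckTown}), part~\eqref{prEstII} is obtained by summing the geometric series in the Frobenius tail $\bigl(\sum_{k>r}\sigma_k^2\bigr)^{1/2}$, and part~\eqref{prEstIII} is obtained by solving $C_1\|H(d,f)\|_2\, d\, q(d)^{-r}\le\varepsilon$ for $r$, which is word for word the paper's argument. The uniformity issues you flag are immediate because $\ln q(d)=\pi^2/(4((d+2)\ln 2-\ln\pi))$ is exactly of order $1/d$ for all $d\in\mathbb N$, and your geometric-series factor, after taking the square root, is in fact only $O(\sqrt d)\le O(d)$, so the bookkeeping closes without difficulty.
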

\begin{proof}
    The statement~\eqref{prEstI} is a direct corollary of \cite[Corollary~5.5]{BeckTown}, since $H(d,f)$ is a positive semi-definite Hankel matrix. To prove~\eqref{prEstII} note that the distance in Frobenius norm from $H(d,f)$ to matrices of rank $r$ is equal to
    $$
        \sqrt{\sum_{k = r+1}^\infty \sigma_k (H(d,f))^2}.
    $$
    Now the proof of~\eqref{prEstII} can be finished using estimates from~\eqref{prEstI} and summing the corresponding geometric series. Finally, statement~\eqref{prEstIII} follows from solving the following inequality for $r$
    $$
        C_1 \|H(d,f)\|_2 dq(d)^{-r} \le \varepsilon.
    $$
\end{proof} 
\begin{theorem}\label{th1}
    Assume that the Hamburger moment problem for $f$ is solvable and that there exists a function $g(t)$ satisfying the conditions of Proposition~\ref{prBasicPropertiesOfH}~\eqref{prBasicII}. Let $M = \mathrm{esssup}_{t \in \mathbb R} |g(t)|$. There exists a constant $C > 0$ independent of $f$ such that for all $\varepsilon \in (0,1)$ and all $d \in \mathbb N$ there exists a tensor $T$ of the size $2 \times \dots \times 2 = 2^d$ with tensor train ranks not exceeding $C d (\ln(d) + \ln(1/\varepsilon) + \ln(M) + 1)$ such that $\|Q(d, f) - T\|_F \le \varepsilon$.
\end{theorem}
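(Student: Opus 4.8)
The plan is to reduce the tensor approximation problem to a matrix approximation problem for the single Hankel matrix $H(d,f)$, and then invoke Proposition~\ref{prEstimatesOnH}. The starting point is the identification, recalled in the text, of the minimal TT-ranks of a tensor with the ranks of its unfolding matrices $A(s,d,f)$, $s = 1, \dots, d-1$. More precisely, I would use the standard error bound underlying the TT-SVD algorithm~\cite{oseledets2011tensor}: if for every $s$ the error of the best rank-$r$ approximation of $A(s,d,f)$ in Frobenius norm does not exceed $\delta$, then there is a tensor $T$ with all TT-ranks at most $r$ such that $\|Q(d,f) - T\|_F^2 \le \sum_{s=1}^{d-1}\delta^2 \le d\,\delta^2$, i.e.\ $\|Q(d,f) - T\|_F \le \sqrt d\,\delta$. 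Thus it suffices to produce, for a single prescribed rank $r$, simultaneous rank-$r$ approximations of all unfoldings sharing a common error $\delta$.

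Here the crucial structural observation is that each $A(s,d,f)$ is a submatrix of $H(d,f)$. Consequently, if $H$ is any rank-$r$ matrix approximating $H(d,f)$, then restricting $H$ to the rows and columns that cut out $A(s,d,f)$ yields a matrix of rank at most $r$ whose Frobenius distance to $A(s,d,f)$ is at most $\|H(d,f) - H\|_F$, since passing to a submatrix cannot increase the Frobenius norm of the difference. Therefore a single good low-rank approximation of $H(d,f)$ controls all the unfoldings at once. I would then apply Proposition~\ref{prEstimatesOnH}~\eqref{prEstIII} with target accuracy $\delta = \varepsilon/\sqrt d$: this produces a matrix $H$ with $\|H(d,f) - H\|_F \le \varepsilon/\sqrt d$ and $\mathrm{rank}\, H \le C_2 d(\ln d + \ln(\sqrt d/\varepsilon) + \ln\|H(d,f)\|_2 + 1)$. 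Combining with the previous paragraph yields a tensor $T$ with TT-ranks at most this $r$ and $\|Q(d,f) - T\|_F \le \sqrt d\cdot \varepsilon/\sqrt d = \varepsilon$.

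It remains to bring the rank bound into the stated form. Writing $\ln(\sqrt d/\varepsilon) = \tfrac12\ln d + \ln(1/\varepsilon)$ merges the extra $\tfrac12\ln d$ with the leading $\ln d$ term, and by Proposition~\ref{prBasicPropertiesOfH}~\eqref{prBasicII} we have $\|H(d,f)\|_2 \le M$, hence $\ln\|H(d,f)\|_2 \le \ln M$; both adjustments are absorbed into a single constant $C$ independent of $f$, giving $r \le C d(\ln d + \ln(1/\varepsilon) + \ln M + 1)$. \textbf{The main obstacle} is not any single estimate but the passage from matrices to the tensor: one must argue that a low-rank approximation of the aggregate Hankel matrix simultaneously certifies low TT-ranks, which relies both on the submatrix relation and on the quadratic accumulation of the TT-SVD truncation errors across the $d-1$ unfoldings; the latter is exactly what forces the harmless rescaling $\varepsilon \mapsto \varepsilon/\sqrt d$ and the corresponding $\sqrt{d-1}$ quasi-optimality factor. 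The remaining logarithmic bookkeeping is routine.
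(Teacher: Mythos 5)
Your proof is correct and follows essentially the same route as the paper's: both apply the TT-SVD error bound from~\cite{oseledets2011tensor} with per-unfolding accuracy $\varepsilon$ rescaled by $\sqrt{d}$ (the paper uses $\sqrt{d-1}$), exploit the fact that every unfolding $A(s,d,f)$ is a submatrix of $H(d,f)$ so that one low-rank approximation of the Hankel matrix controls all unfoldings via Proposition~\ref{prEstimatesOnH}~\eqref{prEstIII}, and finish with $\|H(d,f)\|_2 \le M$ from Proposition~\ref{prBasicPropertiesOfH}~\eqref{prBasicII}. Your write-up in fact makes explicit a step the paper leaves implicit, namely that restricting a rank-$r$ approximant of $H(d,f)$ to a submatrix yields a rank-$\le r$ approximant of each unfolding without increasing the Frobenius error.
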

\begin{proof}
    We consider the approximation of the tensor $Q(d, f)$ obtained by the TT-SVD procedure \cite[Section~2]{oseledets2011tensor} with low-rank approximation of each unfolding matrix with error not exceeding $\varepsilon / \sqrt{d-1}$ in Frobenius norm. By~\cite[Theorem~2.2]{oseledets2011tensor} the obtained tensor $T$ satisfies 
    $$\|Q(f) - T\|_F \le \varepsilon$$
    and its tensor train ranks do not exceed ranks of the approximations of corresponding unfolding matrices. 
    
    Since all unfolding matrices of $Q(d, f)$ are submatrices in $H(d,f)$ by Proposition~\ref{prEstimatesOnH}~\eqref{prEstIII} we obtain that it is possible to choose approximations with ranks not exceeding 
    $$ C_2 d (\ln (d) + \ln(\sqrt{d-1}/\varepsilon) + \ln(\|H(d,f)\|_2) + 1).
    $$
    Now to finish the proof it remains to apply the inequality $\|H(d,f)\|_2 \le M$ from Proposition~\ref{prBasicPropertiesOfH}~\eqref{prBasicII}.
\end{proof}

\begin{figure}[ht!]
    \centering
    \includegraphics[scale=0.6]{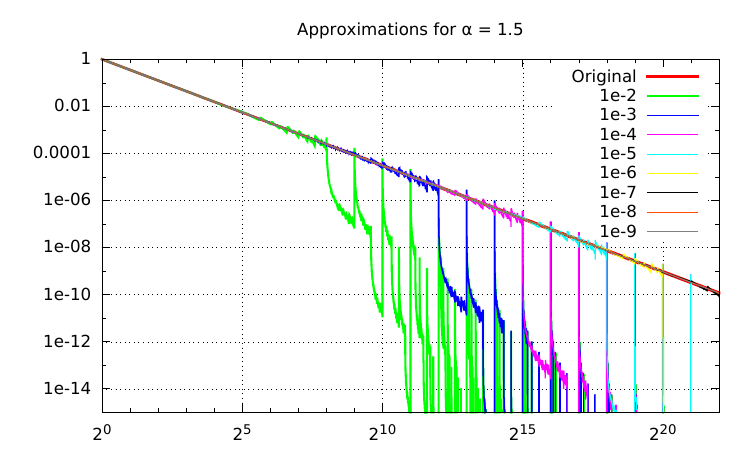}~
    \includegraphics[scale=0.6]{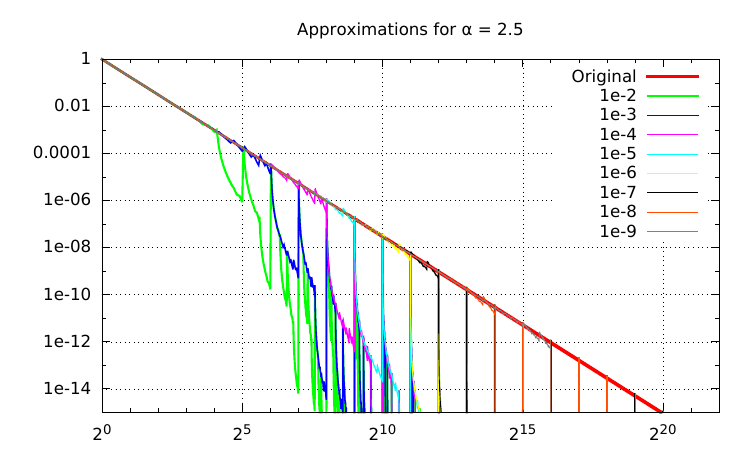}
    \caption{QTT approximations with multiple tolerance levels for $d=22$ and $\alpha=1.5, 2.5$. Approximation with TTSVD accuracy parameter $\varepsilon=10^{-9}$ requires just maximal QTT-rank $R=7$, each increase of this rank by 1 allows to increase the accuracy of the approximation by dozens of times.}
    \label{fig:TTSVD}
\end{figure}

\begin{remark}
    Note that if $f$ is summable, then a function $g$ satisfying requirements of Proposition~\ref{prBasicPropertiesOfH}~\eqref{prBasicII} exists. Indeed, in this case we can set $g(t) = \sum\limits_{k = 1}^\infty f(k) e^{ikt}$. This function is obviously bounded and $|g(t)| \le \sum\limits_{k = 1}^\infty |f(k)|$. So, assuming in addition that the Hamburger moment problem for $f$ is solvable, it is possible to apply Theorem~\ref{th1} with $M = \sum\limits_{k = 1}^\infty |f(k)|$.
\end{remark}
\begin{remark}
    The sequence $f(k) = 1/k$ also satisfies the requirements of Theorem~\ref{th1} even though it is not summable. Indeed, consider $g(t) = \sum\limits_{k = 1}^\infty (e^{ikt} - e^{-ikt})/ k = 2i \sum\limits_{k = 1}^\infty \sin(kt)/k$. It is known that $g(t)$ is essentially bounded and, therefore, it satisfies the conditions of Proposition~\ref{prBasicPropertiesOfH}~\eqref{prBasicII}.
\end{remark}

\section{Numerical verification}

To verify our theoretical investigations we apply the TTSVD algorithm with multiple tolerance values (from $\varepsilon=10^{-2}$ to $10^{-9}$), $d$ and several values of $\alpha$. In our experiments, we observe really low numerical QTT-ranks of the constructed approximations in agreement with asymptotics. We demonstrate the results of TT approximations in Figure \ref{fig:TTSVD} for the fixed $d=22$. 

\begin{table}
    \centering
    \begin{tabular}{p{1.5 cm}|c|c|c|c|c|c|c|c}
    \hline
         Accuracy & $10^{-2}$ & $10^{-3}$ & $10^{-4}$ & $10^{-5}$
         & $10^{-6}$ & $10^{-7}$ & $10^{-8}$ & $10^{-9}$\\
         \hline
         $\bf{\alpha = 2.5}$ & $R ~~ \hat{r}$  & $R ~~ \hat{r}$ &  $R ~~ \hat{r}$ &  $R ~~ \hat{r}$ &  $R ~~ \hat{r}$ &  $R ~~ \hat{r}$ &  $R ~~ \hat{r}$ &  $R ~~ \hat{r}$ \\
         \hline
         $d=30$ &  2 ~~ 1.07 &  3 ~~ 1.20 &  3 ~~ 1.33 &  4 ~~ 1.50 &  4 ~~ 1.70 &  5 ~~ 1.97 &  6 ~~ 2.17 &  6 ~~ 2.50 \\
         \hline
         $d=28$ &  2 ~~ 1.07 &  3 ~~ 1.21 &  3 ~~ 1.36 &  4 ~~ 1.54 &  4 ~~ 1.75 &  5 ~~ 2.04 &  6 ~~ 2.25 &  6 ~~ 2.61  \\
         \hline
         $d=26$ &  2 ~~ 1.08 &  3 ~~ 1.23 &  3 ~~ 1.35 &  4 ~~ 1.58 &  4 ~~ 1.81 &  5 ~~ 2.08 &  6 ~~ 2.35 &  6 ~~ 2.69  \\
         \hline
         $d=24$ &  2 ~~ 1.08 &  3 ~~ 1.21 &  3 ~~ 1.38 &  4 ~~ 1.62 &  4 ~~ 1.83 &  5 ~~ 2.12 &  6 ~~ 2.46 &  6 ~~ 2.83 \\
         \hline
         $d=22$ & 2 ~~ 1.09 &  3 ~~ 1.23 &  3 ~~ 1.41 &  4 ~~ 1.68 &  4 ~~ 1.91 &  5 ~~ 2.18 &  6 ~~ 2.59 &  6 ~~ 3.00  \\
         \hline
         $d=20$ &  2 ~~ 1.10 &  3 ~~ 1.25 &  3 ~~ 1.45 &  4 ~~ 1.75 &  4 ~~ 2.00 &  5 ~~ 2.30 &  6 ~~ 2.75 &  6 ~~ 3.20  \\
        \hline
        $d=18$ &  2 ~~ 1.11 &  3 ~~ 1.28 &  3 ~~ 1.44 &  4 ~~ 1.78 &  4 ~~ 2.11 &  5 ~~ 2.44 &  6 ~~ 2.94 &  6 ~~ 3.33 \\
        \hline        
        $d=16$ &  2 ~~ 1.12 &  3 ~~ 1.31 &  3 ~~ 1.50 &  4 ~~ 1.88 &  4 ~~ 2.25 &  5 ~~ 2.62 &  5 ~~ 3.12 &  6 ~~ 3.62  \\
        \hline 
        $d=14$ &  2 ~~ 1.14 &  3 ~~ 1.36 &  3 ~~ 1.57 &  4 ~~ 2.00 &  4 ~~ 2.29 &  5 ~~ 2.86 &  5 ~~ 3.29 &  6 ~~ 3.93  \\
        \hline
        $d=12$ &  2 ~~ 1.17 &  3 ~~ 1.42 &  3 ~~ 1.67 &  4 ~~ 2.17 &  4 ~~ 2.50 &  5 ~~ 3.17 &  5 ~~ 3.50 &  6 ~~ 3.92 \\
        \hline    
         \hline
        $\bf{\alpha = 1.5}$ & ~  & ~ &  ~ &  ~ &  ~ &  ~ &  ~ &  ~\\
        \hline
         $d=30$ &  3 ~~ 1.30 &  3 ~~ 1.53 &  4 ~~ 1.90 &  5 ~~ 2.30 &  5 ~~ 2.77 &  6 ~~ 3.27 &  6 ~~ 3.80 &  7 ~~ 4.47  \\
         \hline
         $d=28$ &  3 ~~ 1.32 &  3 ~~ 1.57 &  4 ~~ 1.96 &  5 ~~ 2.39 &  5 ~~ 2.86 &  6 ~~ 3.43 &  6 ~~ 4.00 &  7 ~~ 4.61  \\
         \hline
         $d=26$ &  3 ~~ 1.35 &  3 ~~ 1.62 &  4 ~~ 2.04 &  5 ~~ 2.50 &  5 ~~ 2.96 &  6 ~~ 3.62 &  6 ~~ 4.15 &  7 ~~ 4.69  \\
         \hline
         $d=24$ &   3 ~~ 1.33 &  3 ~~ 1.67 &  4 ~~ 2.12 &  5 ~~ 2.54 &  5 ~~ 3.12 &  6 ~~ 3.71 &  6 ~~ 4.29 &  7 ~~ 4.83 \\
         \hline
         $d=22$ &  3 ~~ 1.36 &  3 ~~ 1.73 &  4 ~~ 2.23 &  5 ~~ 2.68 &  5 ~~ 3.23 &  6 ~~ 3.91 &  6 ~~ 4.41 &  7 ~~ 4.95 \\
         \hline
         $d=20$ &  3 ~~ 1.40 &  3 ~~ 1.75 &  4 ~~ 2.30 &  4 ~~ 2.75 &  5 ~~ 3.45 &  6 ~~ 4.00 &  6 ~~ 4.40 &  7 ~~ 5.00 \\
        \hline
        $d=18$ &   3 ~~ 1.44 &  3 ~~ 1.83 &  4 ~~ 2.39 &  4 ~~ 2.94 &  5 ~~ 3.56 &  6 ~~ 4.06 &  6 ~~ 4.50 &  7 ~~ 5.00 \\
        \hline        
        $d=16$ &  3 ~~ 1.50 &  3 ~~ 1.94 &  4 ~~ 2.56 &  4 ~~ 3.06 &  5 ~~ 3.62 &  6 ~~ 4.12 &  6 ~~ 4.50 &  7 ~~ 5.00\\
        \hline 
        $d=14$ &  3 ~~ 1.50 &  3 ~~ 2.00 &  4 ~~ 2.64 &  4 ~~ 3.14 &  5 ~~ 3.71 &  6 ~~ 4.21 &  6 ~~ 4.50 &  7 ~~ 4.93 \\
        \hline
        $d=12$ & 2 ~~ 1.50 &  3 ~~ 2.08 &  4 ~~ 2.83 &  4 ~~ 3.17 &  5 ~~ 3.67 &  6 ~~ 4.08 &  6 ~~ 4.33 &  7 ~~ 4.75\\
        \hline     
    \end{tabular}
    \caption{Maximal QTT-ranks $R$ for multiple values $d$ and varying accuracy $\varepsilon$ for the TTSVD algorithm. We fix $\alpha = 2.5 $ and $\alpha = 1.5 $. }
    \label{tab:QTT_ranks}
\end{table}

We also demonstrate the logarithmic increase of maximal QTT ranks in Table \ref{tab:QTT_ranks}. Even though the whole set of ranks changes slightly to a variation of $d$ there is almost no change in $R$. We indicate this observation showing the average ranks $\hat{r}=\frac{1}{d}\sum\limits_{i=1}^d R_i$ in Table \ref{tab:QTT_ranks}. It agrees with the Theorem from the previous section quite well: one needs to have a really big change of $d$ to obtain logarithmic growth of $R$ (but even $d=30$ corresponds to vectors with more than a billion of entries).  At the same time, we see a gradual logarithmic increase of $R$ with respect to accuracy as predicted by theory. These results agree quite well with numerical observations from \cite{timokhin2020tensorisation} for more complicated calculations but without theoretical estimates.

We see that $\hat{r}$ decreases with respect to increasing $d$ for the fixed levels of the approximation accuracy. Such behaviour of $\hat{r}$ is rather natural. As soon as Frobenius norm of the ``tail'' satisfies 
$$\sqrt{\sum_{k=N}^{\infty} k^{-2\alpha}} <\varepsilon$$
the QTT-ranks for the corresponding virtual indices $k > N$ stay constant and equal to one for any $d$ without significant influence on the total accuracy of the approximation.

\section{Conclusion}

We have studied the ranks of the approximate quantized tensor train decompositions for the power functions $f(k) = k^{-\alpha}$ and prove that $R$ should grow logarithmically with respect to the accuracy of the approximation in Frobenius norm as well as to the size of the vector $N = 2^d$. We verify our theory numerically with the use of the TTSVD algorithm and obtain its good agreement with numerical results. These results allow us to get analytical justification for the efficiency of recent numerical calculations for aggregation-fragmentation equations representing the solution in QTT-format \cite{timokhin2020tensorisation}.

\section*{FUNDING}

This work was supported by the Russian Science Foundation (project no.  \href{https://www.rscf.ru/project/21-71-10072/}{21-71-10072}).


\begin{thebibliography}{10}

\bibitem{fortin2023stability}
J.-Y. Fortin and M.~Choi, ``Stability condition of the steady oscillations in
  aggregation models with shattering process and self-fragmentation,'' {\em
  Journal of Physics A: Mathematical and Theoretical}, vol.~56, no.~38,
  p.~385004, 2023.

\bibitem{leyvraz2003scaling}
F.~Leyvraz, ``Scaling theory and exactly solved models in the kinetics of
  irreversible aggregation,'' {\em Physics Reports}, vol.~383, no.~2-3,
  pp.~95--212, 2003.

\bibitem{matveev2020oscillating}
S.~Matveev, A.~Sorokin, A.~Smirnov, and E.~Tyrtyshnikov, ``Oscillating
  stationary distributions of nanoclusters in an open system,'' {\em
  Mathematical and Computer Modelling of Dynamical Systems}, vol.~26, no.~6,
  pp.~562--575, 2020.

\bibitem{oseledets2011tensor}
I.~V. Oseledets, ``Tensor-train decomposition,'' {\em SIAM Journal on
  Scientific Computing}, vol.~33, no.~5, pp.~2295--2317, 2011.

\bibitem{tyrtyshnikov2003tensor}
E.~E. Tyrtyshnikov, ``Tensor approximations of matrices generated by
  asymptotically smooth functions,'' {\em Sbornik: Mathematics}, vol.~194,
  no.~6, p.~941, 2003.

\bibitem{oseledets2009breaking}
I.~V. Oseledets and E.~E. Tyrtyshnikov, ``Breaking the curse of dimensionality,
  or how to use {S}{V}{D} in many dimensions,'' {\em SIAM Journal on Scientific
  Computing}, vol.~31, no.~5, pp.~3744--3759, 2009.

\bibitem{sultonov2023low}
A.~Sultonov, S.~Matveev, and S.~Budzinskiy, ``Low-rank nonnegative tensor
  approximation via alternating projections and sketching,'' {\em Computational
  and Applied Mathematics}, vol.~42, no.~2, p.~68, 2023.

\bibitem{oseledets2010tt}
I.~Oseledets and E.~Tyrtyshnikov, ``T{T}-cross approximation for
  multidimensional arrays,'' {\em Linear Algebra and its Applications},
  vol.~432, no.~1, pp.~70--88, 2010.

\bibitem{white1992density}
S.~R. White, ``Density matrix formulation for quantum renormalization groups,''
  {\em Physical review letters}, vol.~69, no.~19, p.~2863, 1992.

\bibitem{khoromskij2018tensor}
B.~N. Khoromskij, {\em Tensor numerical methods in scientific computing},
  vol.~19.
\newblock Walter de Gruyter GmbH \& Co KG, 2018.

\bibitem{dolgov2014alternating}
S.~V. Dolgov and D.~V. Savostyanov, ``Alternating minimal energy methods for
  linear systems in higher dimensions,'' {\em SIAM Journal on Scientific
  Computing}, vol.~36, no.~5, pp.~A2248--A2271, 2014.

\bibitem{zheltkov2020global}
D.~Zheltkov and E.~Tyrtyshnikov, ``Global optimization based on
  {T}{T}-decomposition,'' {\em Russian Journal of Numerical Analysis and
  Mathematical Modelling}, vol.~35, no.~4, pp.~247--261, 2020.

\bibitem{oseledets2009approximation}
I.~Oseledets, ``Approximation of matrices with logarithmic number of
  parameters,'' in {\em Doklady Mathematics}, vol.~80, pp.~653--654, Springer,
  2009.

\bibitem{vysotsky2021tt}
L.~I. Vysotsky, ``T{T} ranks of approximate tensorizations of some smooth
  functions,'' {\em Computational Mathematics and Mathematical Physics},
  vol.~61, no.~5, pp.~750--760, 2021.

\bibitem{vysotsky2023tensor}
L.~Vysotsky and M.~Rakhuba, ``Tensor rank bounds and explicit {Q}{T}{T}
  representations for the inverses of circulant matrices,'' {\em Numerical
  Linear Algebra with Applications}, vol.~30, no.~3, p.~e2461, 2023.

\bibitem{timokhin2020tensorisation}
I.~Timokhin, ``Tensorisation in the solution of {S}moluchowski type
  equations,'' in {\em Large-Scale Scientific Computing: 12th International
  Conference, LSSC 2019, Sozopol, Bulgaria, June 10--14, 2019, Revised Selected
  Papers 12}, pp.~181--188, Springer, 2020.

\bibitem{BeckTown}
B.~Beckermann and A.~Townsend, ``Bounds on the singular values of matrices with
  displacement structure,'' {\em SIAM Review}, vol.~61, no.~2, pp.~319--344,
  2019.
  
\bibitem{teeHankel}
E.~E.~Tyrtyshnikov, ``How bad are Hankel matrices?'', {\em Numerische Mathematik}, vol. 67, no. 2, pp.~261--269, 1994.

\bibitem{ReedSimon}
M.~Reed and B.~Simon, {\em Methods of modern mathematical physics. {II}.
  {F}ourier analysis, self-adjointness}.
\newblock Academic Press [Harcourt Brace Jovanovich, Publishers], New
  York-London, 1975.

\bibitem{Nehari}
Z.~Nehari, ``On bounded bilinear forms,'' {\em Ann. of Math. (2)}, vol.~65,
  pp.~153--162, 1957.

\end{thebibliography}
\end{document}